\documentclass[12pt]{amsart}

\usepackage{amssymb,amsmath,amsfonts,amsthm,amsrefs}

\usepackage{pstricks}

\addtolength{\voffset}{-1cm}
\addtolength{\textheight}{2cm}
\addtolength{\hoffset}{-1cm}
\addtolength{\textwidth}{2cm}

\theoremstyle{plain}
\newtheorem{thm}{Theorem}[section]
\newtheorem{lem}[thm]{Lemma}
\newtheorem{prop}[thm]{Proposition}
\newtheorem{cor}[thm]{Corollary}

\theoremstyle{definition}

\newtheorem{rem}[thm]{Remark}
\newtheorem{exa}[thm]{Example}

\def\n{\noindent}
\def\I{\mathop{\rm Inc}\nolimits}
\def\SYT{\mathop{\rm SYT}\nolimits}
\def\cont{\mathop{\rm cont}\nolimits}

\newdimen\Squaresize \Squaresize=18pt
\newdimen\Thickness \Thickness=0.4pt

\def\Square#1{\hbox{\vrule width \Thickness
   \vbox to \Squaresize{\hrule height \Thickness\vss
      \hbox to \Squaresize{\hss#1\hss}
   \vss\hrule height\Thickness}
\unskip\vrule width \Thickness} \kern-\Thickness}

\def\Vsquare#1{\vbox{\Square{$#1$}}\kern-\Thickness}

\def\younglambda#1{
\vbox{\smallskip\offinterlineskip \halign{&\Vsquare{##}\cr #1}}}
\def\vy#1{\hskip2pt\vcenter{\younglambda{#1}}\hskip2pt}

\title[Increasing tableaux and the cyclic sieving phenomenon]%
 {Increasing tableaux, Narayana numbers and an instance of the cyclic sieving phenomenon}
\author{Timothy Pressey, Anna Stokke and Terry Visentin}

\address{University of Winnipeg \\
Department of Mathematics and Statistics \\
Winnipeg, Manitoba \\
Canada  R3B 2E9}

\email{a.stokke@uwinnipeg.ca, t.visentin@uwinnipeg.ca}
\thanks{This research was supported in part by a grant from the Natural
Sciences and Engineering Research Council of Canada.}

\begin{document}

\begin{abstract} We give a counting formula for the set of rectangular increasing tableaux in terms of generalized Narayana numbers.  We define small $m$-Schr\"oder paths and give a bijection between the set of increasing rectangular tableaux and small $m$-Schr\"oder paths, generalizing a result of Pechenik \cite{pechenik}.
Using $K$-jeu de taquin promotion, we give a cyclic sieving phenomenon for the set of increasing hook tableaux.  \end{abstract}
\maketitle

\section{Introduction}

Let $\lambda$ be a partition of a positive integer $N$.  An {\em increasing} tableau $T$ is a $\lambda$-tableau in which both rows and columns are strictly increasing and, if $N-k$ is the largest entry in the tableau, then each $i$ with $1 \leq i \leq N-k$ appears at least once in $T$.  Let $\I_k(\lambda)$ denote the set of increasing $\lambda$-tableaux with maximum value $N-k$ and let $\SYT(\lambda)=\I_0(\lambda)$ denote the set of standard $\lambda$-tableaux with entries in $\{1,2,\ldots,N\}$.   In the first half of the article we focus on increasing tableaux of rectangular shape $\lambda=(n,n,\ldots,n)=n^m$ and will denote the corresponding sets by $\I_k(m \times n)$ and $\SYT(m \times n)$. 

The two-dimensional Catalan numbers enumerate $\SYT(2 \times n)$, the set of standard tableaux with two rows.  In \cite{pechenik}, Pechenik gave explicit bijections between $\I_k(2 \times n)$, small Schr\"oder paths with $k$ diagonal steps and $\SYT(n-k,n-k,1^k)$, giving a formula for the cardinality of $\I_k(2 \times n)$ and showing that the total number of increasing tableaux of shape $2 \times n$ is the $n$th small Schr\"oder number. 

The generalized Narayana numbers $N(m,n,\ell)$ studied in \cite{sulanke3} and \cite{sulankeD} count the $m$-dimensional lattice paths from $(0,0,\ldots,0)$ to $(n,n,\ldots,n)$ lying in the region $\{(x_1,x_2,\ldots,x_m)\mid 0 \leq x_m \leq \cdots \leq x_1\}$ using steps $X_1 = (1,0,\ldots,0), X_2 = (0,1,\ldots,0), \ldots, X_m = (0,0,\ldots,1)$, which have $\ell$ ascents.  An ascent in a path occurs when the path contains consecutive steps $X_iX_{j}$ with $j < i$.  We prove that the cardinality of $\I_k(m \times n)$ is a linear combination of Narayana numbers in Theorem \ref{cardthm}.  An interesting corollary is that $\displaystyle \vert \I_1(m\times n)\vert=\frac{(m-1)(n-1)}{2}\vert \SYT(m \times n) \vert$.

The small $m$-Schr\"oder numbers are given by the sequence $(N_{m,n}(2))_{n \geq 0}$,
where $N_{m,n}(t)=\sum_{\ell=0}^{(m-1)(n-1)}N(m,n,\ell)t^\ell$ is the $m$-Narayana polynomial.
In general, the $m$-dimensional Catalan numbers $(N_{m,n}(1))_{n \geq 0}$ enumerate $\SYT(m \times n)$.
We prove that the small $m$-Schr\"oder number $N_{m,n}(2)$ is equal to the total number of increasing rectangular tableaux
of shape $m \times n$, generalizing Pechenik's result for $\I_k(2 \times n)$.
We define a generalized version of small Schr\"oder paths in $m$-dimensional space, called small $m$-Schr\"oder paths,
and give a bijection between small $m$-Schr\"oder paths and the set of increasing tableaux of shape $m \times n$.

  Let $X$ be a finite set, $C=\langle g \rangle$ a cyclic group of order $N$ that acts on $X$, and $X(q)$ a polynomial with integer coefficients.  The triple $(X,C,X(q))$ is said to exhibit the cyclic
 sieving phenomenon (CSP) \cite{rsw} if for any positive integer $d$,  we have $X(\omega^d)=\vert \{x \in X \mid g^d x =x\}\vert,$
 where $\omega=e^{2\pi i/N}$ is a primitive $N$th root of unity.
A CSP for $\SYT(m \times n)$ was given by Rhoades \cite{rhoades} using classic jeu de taquin promotion and
a $q$-analogue of the Frame-Robinson-Thrall hook length formula \cite{frt}.
Using the $K$-jeu de taquin of Thomas and Yong \cite{thomasyong} and a natural $q$-analogue of a formula
that enumerates $\I_k(2\times n)$, Pechenik gave a CSP for $\I_k(2 \times n)$.
Rhoades~\cite{rhoadesnew} has recently given a representation-theoretic proof of this result.
The natural $q$-analogue of our counting formula for $\I_k(m\times n)$ does not, in general,
serve as a CSP polynomial for the action of $K$-promotion on $\I_k(m \times n)$.
In Section \ref{sec:csp}, we focus on proving a CSP for the set $\I_k(N-r,1^r)$ of increasing hook tableaux
using $K$-promotion and a $q$-analogue of a formula that enumerates $\I_k(N-r,1^r)$.
This polynomial has a natural combinatorial interpretation -- the coefficients count arm--leg inversions in increasing hook tableaux, which are pairs $(i,j)$ with $2\leq i<j$, where $i$ belongs to the row  and $j$ the column.
Using a map from $\I_k(N-r,1^r)$ to a set of standard hook tableaux that behaves nicely with respect to $K$-promotion, along with results of Reiner, Stanton and White \cite{rsw},
we exhibit a CSP for the set of increasing hook tableaux.

 \section{Enumerating increasing tableaux with Narayana numbers}
 
 We recall results concerning generalized Narayana numbers and generalized Schr\"oder numbers from \cite{sulankeD}.
 
 Let $\mathcal{C}(m,n)$ denote the set of lattice paths in $m$-dimensional space
 that run from $(0,0,\ldots,0)$ to $(n,n,\ldots,n)$ using the steps
 \[ X_1=(1,0,\ldots,0), \ X_2=(0,1,\ldots,0), \ \ldots, X_m=(0,0,\ldots,1) \]
 and lie in the region $\{ (x_1,x_2,\ldots,x_m) \mid 0 \leq x_m \leq x_{m-1} \leq \cdots \leq x_1\}$.
 A pair of steps $\epsilon_i\epsilon_{i+1}$ on a path $P=\epsilon_1\epsilon_2 \cdots \epsilon_{mn}$ is called
 an {\em ascent} if $\epsilon_i\epsilon_{i+1}=X_jX_r$ with $r < j$.
 The set of ascents on a path $P$ is denoted
 \[ \mbox{asc}(P)= \{ i \mid \epsilon_{i-1}\epsilon_i=X_jX_r \mbox{ for } r < j \}. \]
 
 \noindent For $m \geq 2$ and $0 \leq \ell \leq (m-1)(n-1)$, the $m$-Narayana number is defined as
 \[ N(m,n,\ell)=\Big{|} \Bigl\{P \in \mathcal{C}(m,n) \Big{|} \vert\mbox{asc}(P)\vert=\ell\Bigl\}\Big{|}. \]
 
 \noindent For $m \geq 2$ and $n \geq 1$, the $n$th $m$-Narayana polynomial is defined as
 \[ N_{m,n}(t)=\sum_{\ell =0}^{(m-1)(n-1)}N(m,n,\ell) t^\ell. \]
 
 The $m$-dimensional Catalan numbers are given by the sequence $(N_{m,n}(1))_{n \geq 0}$ and these enumerate $\SYT(m \times n)$.
 By the hook length formula, \[ N_{m,n}(1)=(mn)!\prod_{i=0}^{m-1}\frac{i!}{(n+i)!}. \]
 
 The {\em small} {\em $m$-Schr\"oder numbers} are given by the sequence $(N_{m,n}(2))_{n \geq 0}$.  In the case where $m=2$, Pechenik showed that the small $2$-Schr\"oder numbers enumerate the set of increasing $2 \times n$ tableaux.
 
 We will make use of the following proposition and corollary from \cite{sulankeD}.
 
 \begin{prop}\cite[Proposition 1]{sulankeD}
	For $m \geq 2$ and for $0 \leq \ell \leq(m-1)(n-1)$,
	$$N(m,n,\ell) = \sum_{j=0}^{\ell}(-1)^{\ell-j}\begin{pmatrix}mn+1\\\ell-j\end{pmatrix} \prod_{i=0}^{m-1}\begin{pmatrix}n+i+j\\n\end{pmatrix}\begin{pmatrix}n+i\\n\end{pmatrix}^{-1}.$$
\end{prop}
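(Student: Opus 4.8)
The plan is to recast the proposition as a single generating-function identity and then read off the stated formula by extracting one coefficient. Set
$$P(m,n,j)=\prod_{i=0}^{m-1}\binom{n+i+j}{n}\binom{n+i}{n}^{-1},$$
so that the right-hand side of the proposition is exactly $\sum_{j=0}^{\ell}(-1)^{\ell-j}\binom{mn+1}{\ell-j}P(m,n,j)$. Since $(1-x)^{mn+1}=\sum_{k\ge 0}(-1)^k\binom{mn+1}{k}x^k$, this sum is precisely the coefficient of $x^\ell$ in the product $(1-x)^{mn+1}\sum_{j\ge 0}P(m,n,j)x^j$. Hence it suffices to prove the identity
$$N_{m,n}(x)=(1-x)^{mn+1}\sum_{j\ge 0}P(m,n,j)\,x^{j},\qquad\text{equivalently}\qquad \sum_{j\ge 0}P(m,n,j)\,x^{j}=\frac{N_{m,n}(x)}{(1-x)^{mn+1}},$$
after which matching the coefficient of $x^\ell$ on both sides gives the proposition verbatim.

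The first step is to interpret $P(m,n,j)$ combinatorially. I would show that $P(m,n,j)$ equals the number of plane partitions fitting inside an $m\times n\times j$ box, equivalently the number of order-preserving maps from the grid poset $[m]\times[n]$ into the chain $\{0,1,\dots,j\}$. This follows from MacMahon's box formula (or from the hook-content formula for bounded column-strict tableaux after the standard conjugation), by checking that MacMahon's product telescopes into the displayed product of binomial ratios; the cases $j=0,1$, giving $1$ and the number of order ideals of $[m]\times[n]$, serve as sanity checks. In particular $P(m,n,j)$ is a polynomial in $j$ of degree $mn$, namely the order polynomial of $[m]\times[n]$ evaluated at $j+1$.

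The second step is to recognize $\sum_{j\ge0}P(m,n,j)x^j$ as the Ehrhart series of the order polytope $\mathcal{O}(P)$ of $P=[m]\times[n]$. By Stanley's theory of $(P,\omega)$-partitions, for a naturally labelled poset on $mn$ elements the Ehrhart $h^\ast$-polynomial of $\mathcal{O}(P)$ is the descent-generating polynomial over the linear extensions, so that
$$\sum_{j\ge0}P(m,n,j)\,x^{j}=\frac{\sum_{\sigma\in\mathcal{L}(P)}x^{\operatorname{des}(\sigma)}}{(1-x)^{mn+1}}.$$
Linear extensions of $[m]\times[n]$ are exactly the tableaux in $\SYT(m\times n)$, so it remains to identify the numerator with $N_{m,n}(x)$. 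For this I would compose the standard bijection between paths in $\mathcal{C}(m,n)$ and $\SYT(m\times n)$ with the chosen natural labelling and verify that the number of ascents of a path equals the number of descents of the corresponding tableau; then $\sum_{\sigma}x^{\operatorname{des}(\sigma)}=\sum_{P\in\mathcal{C}(m,n)}x^{\vert\operatorname{asc}(P)\vert}=N_{m,n}(x)$, which establishes the desired generating-function identity.

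The main obstacle is this last statistic-matching: one must fix the bijection $\mathcal{C}(m,n)\leftrightarrow\SYT(m\times n)$ and a natural labelling of $[m]\times[n]$ compatibly, and then check that a consecutive pair $X_jX_r$ with $r<j$ in a path corresponds exactly to a descent of the associated tableau, i.e.\ an index $i$ with $i+1$ placed in a strictly lower row. Care is also needed with the single index shift in the order polynomial (whether $P(m,n,j)$ counts maps into $\{0,\dots,j\}$ or $\{1,\dots,j\}$), since a misalignment there would replace $(1-x)^{mn+1}$ by a wrong power and spoil the coefficient extraction. Once the statistic identity and this normalization are pinned down, the remainder is the purely formal coefficient comparison described in the first paragraph.
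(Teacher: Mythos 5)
This proposition is not proved in the paper at all: it is quoted verbatim from Sulanke \cite{sulankeD}, so there is no internal proof to compare yours against. Judged on its own merits, your outline is a correct and essentially complete route to the identity, and it is close in spirit to how the result is established in the literature. The formula is precisely the statement that $N_{m,n}(x)$ is the numerator of the rational generating function $\sum_{j\ge0}P(m,n,j)x^j=N_{m,n}(x)/(1-x)^{mn+1}$, where $P(m,n,j)$ is MacMahon's count of plane partitions in an $m\times n\times j$ box. Your coefficient-extraction reduction is valid; the MacMahon identification is correct (the product of binomial ratios rearranges to $\prod_{r=1}^{m}\prod_{s=1}^{n}\frac{r+s+j-1}{r+s-1}$); and the power $(1-x)^{mn+1}$ is the right normalization, since maps into $\{0,1,\dots,j\}$ give the order polynomial evaluated at $j+1$, i.e.\ the Ehrhart polynomial of the order polytope of the $mn$-element grid poset.

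One caution about the step you yourself flag as the main obstacle: under the path-to-tableau bijection used in Section 2 of the paper, a path ascent $X_jX_r$ ($r<j$) at positions $i-1,i$ corresponds to entry $i$ lying in a row strictly \emph{above} $i-1$, i.e.\ to a tableau \emph{ascent}, not to the standard SYT descent ``$i+1$ strictly below $i$'' as your sketch asserts; the literal statistic-matching you propose would therefore fail. The fix is exactly the freedom you allude to: Stanley's numerator is the descent polynomial of linear-extension \emph{words} with respect to any natural labeling, and it is independent of which natural labeling is chosen. Taking the row-reading labeling $L(r,c)=(r-1)n+c$ of the grid, the word-descents of a linear extension coincide with tableau ascents, hence with path ascents, giving $\sum_{\sigma}x^{\operatorname{des}(\sigma)}=\sum_{P\in\mathcal{C}(m,n)}x^{\vert\operatorname{asc}(P)\vert}=N_{m,n}(x)$. (Equivalently, one may keep the column-reading labeling, under which word-descents are the usual SYT descents, and invoke labeling-independence to conclude that ascents and descents are equidistributed over $\SYT(m\times n)$.) With that correction made inside the step you already isolated, your argument is complete.
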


 \begin{cor}\cite[Corollary 1]{sulankeD}\label{sulankecor}  For $m \geq 2$ and $n \geq 1$, $N_{m,n}(t)$ is a self-reciprocal polynomial of degree $(m-1)(n-1)$.  In other words, for each $n$, the sequence of coefficients of $N_{m,n}(t)$ is symmetric.\end{cor}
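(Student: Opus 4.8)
The statement asserts two things: that the coefficient sequence of $N_{m,n}(t)$ is palindromic, i.e.\ $N(m,n,\ell)=N(m,n,(m-1)(n-1)-\ell)$ for all $0\le\ell\le(m-1)(n-1)$, and that the degree is exactly $(m-1)(n-1)$. The plan is to prove the palindromy first and deduce the degree claim from it. Indeed, a path $P\in\mathcal C(m,n)$ has $\mathrm{asc}(P)=0$ exactly when the index word $\epsilon_1\cdots\epsilon_{mn}$ never strictly decreases, i.e.\ $P=X_1^nX_2^n\cdots X_m^n$; this single path is forced by the region condition, so $N(m,n,0)=1$. Palindromy then gives $N(m,n,(m-1)(n-1))=N(m,n,0)=1\neq0$, so the degree is precisely $(m-1)(n-1)$ (and, as a bonus, there is a unique path of maximal ascent). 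Thus the entire content is the palindromic identity.

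To establish palindromy, I would construct a bijection $\Phi\colon\mathcal C(m,n)\to\mathcal C(m,n)$ with $\mathrm{asc}(\Phi(P))=(m-1)(n-1)-\mathrm{asc}(P)$. The tempting candidate is the obvious symmetry of the region: reverse the step sequence and simultaneously relabel $X_i\mapsto X_{m+1-i}$, which is the central reflection $(x_1,\dots,x_m)\mapsto(n-x_m,\dots,n-x_1)$ carrying $\mathcal C(m,n)$ to itself. The trouble is that this map \emph{preserves} the ascent count rather than reversing it: a consecutive pair $X_aX_b$ is sent to $X_{m+1-b}X_{m+1-a}$, which is an ascent precisely when $b<a$, exactly the condition under which $X_aX_b$ was an ascent. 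Since this is essentially the only elementary symmetry of the Weyl-chamber region, the required $\Phi$ must be genuinely more subtle, and producing it is the main obstacle.

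For a workable $\Phi$ I would pass to the standard encoding of $\mathcal C(m,n)$ by ballot words of content $n^m$, equivalently by standard Young tableaux of rectangular shape $m\times n$, under which $\mathrm{asc}$ becomes the number of strict descents (in value) of the reading word. In the base case $m=2$ these objects are Dyck paths, and the classical Narayana palindromy is realized not by reversing the path but by \emph{mirroring} the associated binary tree, exchanging left and right subtrees; this swaps the two edge types and so complements the peak statistic. I would try to lift this mirror operation to the rectangular setting, either through an evacuation- or Schützenberger-type involution on the tableaux of shape $m\times n$ or through the higher tree/triangulation models underlying $N(m,n,\ell)$, with the aim of producing an ascent-complementing involution on $\mathcal C(m,n)$. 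Verifying that such an operation both stays inside $\mathcal C(m,n)$ and exactly complements the ascent count is the delicate point.

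As an alternative, purely algebraic route, one can instead read the palindromy directly off the closed form in the preceding Proposition by checking the functional equation $t^{(m-1)(n-1)}N_{m,n}(1/t)=N_{m,n}(t)$: substituting the explicit expression for $N(m,n,\ell)$ and reindexing the alternating binomial sum, the substitution $\ell\mapsto(m-1)(n-1)-\ell$ should be matched against a Vandermonde/hypergeometric reflection of the product $\prod_{i=0}^{m-1}\binom{n+i+j}{n}$. The bookkeeping here is the routine-but-heavy part, and it is less illuminating than the bijection; but either way the conclusion is exactly the cited corollary of \cite{sulankeD}, which we are free to invoke as stated.
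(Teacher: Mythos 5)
You should first be aware that the paper contains no proof of this statement: it is Sulanke's Corollary 1, quoted with a citation to \cite{sulankeD}, and that citation is the paper's entire justification. So the only part of your proposal that aligns with the paper is your closing remark that the corollary may be invoked as stated. Everything before that is an independent proof attempt, and as such it has a genuine gap: the palindromy $N(m,n,\ell)=N(m,n,(m-1)(n-1)-\ell)$, which you correctly identify as the whole content of the statement, is never actually established. Your reduction of the degree claim to palindromy is correct (the unique ascent-free path is $X_1^nX_2^n\cdots X_m^n$, so $N(m,n,0)=1$), and your observation that reversing the step sequence while relabelling $X_i\mapsto X_{m+1-i}$ \emph{preserves} rather than complements the ascent count is also correct and is exactly the right thing to notice. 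But at that point the argument stops: none of the proposed repairs is carried out.

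Moreover, one of them cannot work as described. For rectangular shapes, the Sch\"utzenberger evacuation involution on $\SYT(m\times n)$ coincides with rotating the rectangle by $180^\circ$ and replacing each entry $i$ by $mn+1-i$; under the path encoding this is precisely the reversal-plus-relabelling map you already ruled out. A short computation shows it sends the ascent set to $\{\,j \mid mn+2-j\in \mbox{asc}(T)\,\}$, so it preserves the number of ascents and gives no symmetry of the distribution around $(m-1)(n-1)/2$. Thus an ``evacuation-type involution'' is the same dead end, not a way around it. The genuinely workable routes are (i) lifting the $m=2$ ascent-complementing involution (Lalanne--Kreweras, or equivalently your tree-mirroring) to $m>2$, which is a substantial project you only gesture at, or (ii) verifying the functional equation $t^{(m-1)(n-1)}N_{m,n}(1/t)=N_{m,n}(t)$ directly from the alternating-sum formula in the Proposition, which you describe as routine but do not attempt and which is not obviously routine. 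Since the paper itself simply cites Sulanke, the honest version of your proof is its last sentence alone; presented as a proof, the proposal is incomplete.
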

 
A path $P = \epsilon_1\epsilon_2 \cdots \epsilon_{mn}\in \mathcal{C}(m,n)$ gives a standard $m \times n$ tableau
by reading the path left to right and placing $i$ in the $k$th row of the tableau whenever $\epsilon_i = X_k$.
The condition $0 \leq x_m \leq x_{m-1} \ldots \leq x_1$ ensures that if $\epsilon_i = X_k$,
then the number of occurrences of $X_{k-1}$ in the sequence occurring previously is strictly greater than
the number of occurrences of $X_k$, so the tableau generated by this procedure is standard.
In the case where $m=2$, $\mathcal{C}(2,n)$ consists of the paths from $(0,0)$ to $(n,n)$
with horizontal and vertical steps that stay below the line $y=x$.
This is a very well-known set of objects counted by the Catalan numbers.

An ascent occurs in a path $P$ precisely when the tableau generated by it has an entry $i$ occurring in a row above $i-1$
in the rectangular tableau it encodes.
It follows that $N(m,n,\ell)$ is equal to the number of tableaux in $\SYT(m \times n)$ for which an entry $i$
appears in a row above an $i-1$ exactly $\ell$ times.
For a tableau $T \in \SYT(\lambda)$, let $\mbox{asc}(T)=\{i \mbox{ in } T \mid i \mbox{ occurs in a row above } i-1\}$.
 
To obtain a counting formula for increasing tableaux of rectangular shape,
we define a map $\phi:\I_k(m \times n) \rightarrow \SYT(m \times n)$.
We first define $\phi_j : \I_j(m \times n) \rightarrow \I_{j-1}(m \times n)$, for $j \geq 1$.
For $T \in \I_j(m \times n)$, let $a$ be the minimal entry that appears more than once in $T$.
Increase all entries in $T$ that are greater than or equal to $a$, except for the leftmost value of $a$.
Define $\phi:\I_k(m \times n) \rightarrow \SYT(m \times n)$ as a composition
 $\phi=\phi_1 \circ \phi_{2} \circ \cdots \phi_{k-1} \circ \phi_{k}.$

\begin{exa}  Below we find the image of a tableau $T$ under $\phi:\I_3(3 \times 3) \rightarrow \SYT(3 \times 3)$.
\[ T=\vy{1 & 3 & 4 \cr 2 & 4 & 5 \cr 4 & 5 & 6 \cr} \overset{\phi_{3}}\longmapsto  \vy{1 & 3 & 5 \cr 2 & 5 & 6 \cr 4 & 6 & 7 \cr}
 \overset{\phi_{2}}\longmapsto \vy{1 & 3 & 6 \cr 2 & 5 & 7 \cr 4 & 7 & 8 \cr} \overset{\phi_{1}}\longmapsto 
 \vy{1 & 3 & 6 \cr 2 & 5 & 8 \cr 4 & 7 & 9 \cr}=\phi(T) \]

\noindent Note that $\phi$ is not one-to-one.
 For example, to determine the preimage of the tableau $S= \vy{1 & 3 & 6 \cr 2 & 5 & 8 \cr 4 & 7 & 9 \cr}$, \vspace{1mm}
 we consider $\mbox{asc}(S) = \{3,5,6,8\}$.
 Since $k=3$, each $3$-element subset $\{a,b,c\}$ of $\mbox{asc}(S)$, with $a<b<c$, corresponds to an element in the preimage
 by first subtracting one from all entries in $S$ that are greater than or equal to $c$,
 then subtracting one from all entries in the resulting tableau that are greater than or equal to $b$,
 and then repeating the process for $a$.
 So there are ${4 \choose 3}$ elements in the preimage of $S$; specifically
 \[ \phi^{-1}(S)=\left\{\vy{1 & 2 & 3 \cr 2 & 3 & 5 \cr 3 & 4 & 6 \cr}, \vy{1 & 2 & 4 \cr 2 & 4 & 5 \cr 3 & 5 & 6 \cr},
  \vy {1 & 2 & 4 \cr 2 & 3 & 5 \cr 3 & 5 & 6 \cr}, \vy{1 & 3 & 4 \cr 2 & 4 & 5 \cr 4 & 5 & 6 \cr}\right\}. \vspace{1mm} \]

\end{exa}

\begin{thm}\label{cardthm}For $k\geq 0$, $$ \vert\I_k(m\times n)\vert = \sum_{\ell=k}^{(m-1)(n-1)} \binom{\ell}{k} N(m,n,\ell).$$\end{thm}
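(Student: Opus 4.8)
The plan is to prove the formula by analyzing the fibers of the map $\phi:\I_k(m \times n) \rightarrow \SYT(m \times n)$, exactly as the preceding example suggests. The strategy rests on showing that for each standard tableau $S \in \SYT(m \times n)$, the number of elements in the preimage $\phi^{-1}(S)$ depends only on the size of $\mbox{asc}(S)$, and equals $\binom{|\mbox{asc}(S)|}{k}$. Granting this, the theorem follows by summing over fibers: partitioning $\SYT(m \times n)$ according to the value $\ell = |\mbox{asc}(S)|$, and recalling from the discussion preceding the statement that $N(m,n,\ell)$ counts precisely those standard tableaux with $|\mbox{asc}(S)| = \ell$, we obtain
\[
\vert \I_k(m \times n)\vert = \sum_{S \in \SYT(m \times n)} |\phi^{-1}(S)| = \sum_{\ell} N(m,n,\ell)\binom{\ell}{k},
\]
which is the claimed identity once one notes that $\binom{\ell}{k}=0$ for $\ell<k$, so the sum may be restricted to $\ell \geq k$.

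First I would make the fiber description precise. The example indicates that each $k$-element subset $\{a_1 < a_2 < \cdots < a_k\}$ of $\mbox{asc}(S)$ yields a preimage tableau by repeatedly ``lowering'' the tableau: subtract one from all entries greater than or equal to $a_k$, then from those greater than or equal to $a_{k-1}$, and so on down to $a_1$. I would define this map from $k$-subsets of $\mbox{asc}(S)$ to $\I_k(m \times n)$ and verify three things: that the resulting tableau is genuinely an increasing tableau lying in $\I_k(m \times n)$ (rows and columns remain strictly increasing, and no values are skipped so that the ``each value appears'' condition holds), that it lies in $\phi^{-1}(S)$, and that distinct subsets produce distinct tableaux. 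The reason an ascent position is exactly what permits lowering is that if $i$ lies strictly above $i-1$ in $S$, then decrementing all entries $\geq i$ merges the value $i$ onto the value $i-1$ without violating strict increase along rows or columns, since $i$ and $i-1$ occupy incomparable cells; conversely, a non-ascent position cannot be collapsed without breaking strictness. I would prove this column/row-comparability claim carefully, as it is the geometric heart of why ascents index the collapsible positions.

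The main obstacle I expect is the surjectivity-and-injectivity bookkeeping: showing that the subset-to-tableau map is a bijection onto $\phi^{-1}(S)$, rather than merely an injection. The delicate direction is recovering a $k$-subset of $\mbox{asc}(S)$ from an arbitrary $T \in \phi^{-1}(S)$ and confirming the recovered subset, when fed back through the lowering procedure, reproduces $T$. This requires understanding how $\phi$, built from the maps $\phi_j$ that raise entries at the minimal repeated value, acts as the inverse of the lowering operation, and in particular that the ascent set of $S$ is stable enough that applying $\phi_j$ does not create or destroy the ascent positions we are tracking in an uncontrolled way. I would proceed by induction on $k$, using the single-step maps $\phi_j : \I_j(m \times n) \rightarrow \I_{j-1}(m \times n)$: establishing that $\phi_k$ has fibers of a controlled form and that $\mbox{asc}$ is preserved under $\phi_j$ reduces the general statement to iterating the $k=1$ case. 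Once the compatibility of $\phi$ with ascent-lowering is nailed down, the counting identity $|\phi^{-1}(S)| = \binom{|\mbox{asc}(S)|}{k}$ drops out and the summation over fibers completes the proof.
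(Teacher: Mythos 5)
Your overall strategy coincides with the paper's: both arguments count $\I_k(m\times n)$ by summing the fiber sizes of $\phi$ over $\SYT(m\times n)$, using the fact (established in the discussion before the theorem) that $N(m,n,\ell)$ counts the standard tableaux with exactly $\ell$ ascents, so that everything reduces to the fiber-size claim $\vert\phi^{-1}(S)\vert=\binom{\vert\mbox{asc}(S)\vert}{k}$. The paper essentially asserts this claim without proof (it is only illustrated by the example preceding the theorem), so your attempt to prove it is where the real content lies --- and one of your supporting claims is false.

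Specifically, the assertion that ``a non-ascent position cannot be collapsed without breaking strictness,'' and with it the claim that ascents are exactly the collapsible positions, is wrong. Take $S=\vy{1 & 2 \cr 3 & 4 \cr}$; then $3\notin\mbox{asc}(S)$, since $3$ lies in a row below $2$, yet decrementing all entries $\geq 3$ produces $\vy{1 & 2 \cr 2 & 3 \cr}$, a perfectly good element of $\I_1(2\times 2)$. Equal entries of an increasing tableau must occupy cells that are strictly northeast/southwest of one another, so a collapse of $i$ onto $i-1$ is legal in two situations: $i$ strictly above and to the right of $i-1$ (an ascent), or $i$ strictly below and to the left of $i-1$ (not an ascent). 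What makes the count come out right is not this geometry but the \emph{leftmost} convention in the definition of $\phi_j$: the non-leftmost copies of the minimal repeated value $a$ are exactly the copies strictly northeast of the leftmost one, so $\phi_j$ increments precisely the northeast copies, and in the image every entry $a+1$ therefore lies in a row strictly above the unique $a$, i.e., at an ascent. Hence every element of $\phi_1^{-1}(S)$ is a collapse of $S$ at an ascent, which is the surjectivity statement you actually need. The non-ascent collapse above does not contradict this, because it lies in the fiber of a \emph{different} standard tableau, namely $\vy{1 & 3 \cr 2 & 4 \cr}$ (apply $\phi_1$: the leftmost $2$ is the one in the first column, so the other $2$ and the $3$ get incremented). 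So your recovery of a $k$-subset from an arbitrary fiber element must be routed through this convention rather than through the false geometric dichotomy; once that is repaired, your induction on $k$ via the single-step maps $\phi_j$, together with your summation over fibers, completes the proof along the same lines as the paper.
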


\begin{proof} For any tableau $T \in \SYT(m \times n)$, $\phi^{-1}(T) \neq \emptyset$ if and only if
 $\vert \mbox{asc}(T) \vert \geq k$ and if $\vert \mbox{asc}(T)\vert=\ell \geq k$,
 then $\vert \phi^{-1}(T)\vert = \displaystyle {\ell \choose k}$.
 We have
\begin{eqnarray*} \vert \I_k(m \times n)\vert &=& \sum_{T \in \SYT(m \times n)} \vert \phi^{-1}(T) \vert \cr
 & = & \sum_{\ell=k}^{(m-1)(n-1)} {\ell \choose k } \Bigl\vert\Bigl\{T \in \SYT(m \times n) \Big{|} 
  \vert \mbox{asc}(T)\vert = \ell \Bigr\}\Bigr\vert\cr
 & = & \sum_{\ell=k}^{(m-1)(n-1)} {\ell \choose k} N(m,n,\ell). \end{eqnarray*} \vspace{-1mm} \end{proof}

\begin {cor}\label{incone} The number of increasing tableaux of shape $m \times n$ with exactly one repeated entry is given by 
\[ \displaystyle |\I_1(m \times n)| = \frac{(m-1)(n-1)}{2} \vert \SYT(m \times n) \vert. \]
\end {cor}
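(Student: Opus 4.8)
The plan is to specialize Theorem \ref{cardthm} to the case $k=1$, which gives
\[ |\I_1(m \times n)| = \sum_{\ell=1}^{(m-1)(n-1)} \ell \cdot N(m,n,\ell). \]
The key observation is that the right-hand side is nothing more than the first moment of the coefficient sequence of the $m$-Narayana polynomial $N_{m,n}(t)$ (equivalently, it is the derivative $N_{m,n}'(1)$). So the entire corollary reduces to evaluating this first moment in terms of $N_{m,n}(1)$ and the degree of the polynomial.

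Next I would invoke the symmetry supplied by Corollary \ref{sulankecor}, which asserts that $N_{m,n}(t)$ is self-reciprocal of degree $d:=(m-1)(n-1)$; in terms of coefficients this reads $N(m,n,\ell)=N(m,n,d-\ell)$ for every $\ell$. Setting $S=\sum_{\ell=0}^{d}\ell\,N(m,n,\ell)$ and reindexing the sum by $\ell\mapsto d-\ell$, the palindromic symmetry yields
\[ S=\sum_{\ell=0}^{d}(d-\ell)\,N(m,n,\ell)=d\,N_{m,n}(1)-S, \]
so that $2S=d\,N_{m,n}(1)$ and hence $S=\tfrac{d}{2}\,N_{m,n}(1)$. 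This is the standard fact that the mean index of a symmetric (palindromic) sequence of length $d+1$ is $d/2$.

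Finally, since the $\ell=0$ term contributes nothing, the first moment $S$ coincides with $|\I_1(m\times n)|$, while $N_{m,n}(1)=|\SYT(m\times n)|$ as recalled (via the hook length formula) earlier in this section. Substituting $d=(m-1)(n-1)$ then gives the claimed identity. I do not anticipate any genuine obstacle: the proof uses only the $k=1$ instance of Theorem \ref{cardthm} together with the coefficient symmetry, so it amounts to a short computation for the first moment of a symmetric sequence.
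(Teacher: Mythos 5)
Your proposal is correct and follows essentially the same route as the paper: specialize Theorem \ref{cardthm} at $k=1$ (where $\binom{\ell}{1}=\ell$), then exploit the palindromic symmetry $N(m,n,\ell)=N(m,n,(m-1)(n-1)-\ell)$ from Corollary \ref{sulankecor} to evaluate the first moment as $\frac{(m-1)(n-1)}{2}N_{m,n}(1)=\frac{(m-1)(n-1)}{2}\vert\SYT(m\times n)\vert$. The only cosmetic difference is that you solve $S = d\,N_{m,n}(1)-S$ for $S$, whereas the paper averages the sum with its reindexed copy; these are the same computation.
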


\begin{proof}

By Corollary \ref{sulankecor}, we have $N(m,n,\ell) = N(m,n,(m-1)(n-1) - \ell)$ for $0 \leq \ell \leq (m-1)(n-1)$. It follows that 
	\small{\begin{align*}
		 \vert \I_1(m \times n) \vert &= \frac{1}{2} \Big{(} \sum_{\ell=0}^{(m-1)(n-1)} \ell N(m,n,\ell) + \sum_{\ell=0}^{(m-1)(n-1)} ((m-1)(n-1) - \ell) N(m,n,\ell) \Big{)}\\
		 			   &= \frac{(m-1)(n-1)}{2} \sum_{\ell=0}^{(m-1)(n-1)} N(m,n,\ell) \\
					   &= \frac{(m-1)(n-1)}{2} \vert \SYT(m \times n)\vert.
 \end{align*} } \vspace{-1mm} \end{proof}

Using the above result, we can use the hook length formula for $\vert \SYT(m \times n) \vert$ to give the cardinality of
$\I_1(m \times n)$.

\begin{cor}  \label{33cor} For $m\geq 2$, the number of increasing tableaux of shape $m \times n$ with maximum entry $mn-1$
 is given by
 \[ \vert \I_1(m \times n) \vert = \frac{(m-1)(n-1)(mn)!}{2}\prod_{i=0}^{m-1}\frac{i!}{(n+i)!}. \]\end{cor}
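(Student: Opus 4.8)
The plan is to combine Corollary \ref{incone} with the hook length formula recorded above, so this is essentially a substitution argument. First I would make the bookkeeping identification explicit: a rectangular tableau of shape $m \times n$ has $N = mn$ cells, and in the notation $\I_k$ an increasing tableau has maximum entry $N - k$. Hence an increasing tableau of shape $m \times n$ with maximum entry $mn - 1 = N - 1$ is precisely an element of $\I_1(m \times n)$, and the quantity to be computed is exactly $|\I_1(m \times n)|$.

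Next I would invoke Corollary \ref{incone}, which already supplies
\[ |\I_1(m \times n)| = \frac{(m-1)(n-1)}{2} \, |\SYT(m \times n)|. \]
At this stage all that remains is to insert a closed form for $|\SYT(m \times n)|$.

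Finally, I would use the fact, noted earlier in this section, that the $m$-dimensional Catalan number $N_{m,n}(1)$ enumerates $\SYT(m \times n)$ and satisfies
\[ N_{m,n}(1) = (mn)! \prod_{i=0}^{m-1} \frac{i!}{(n+i)!} \]
by the hook length formula. Substituting this product for $|\SYT(m \times n)|$ in the expression from Corollary \ref{incone} and carrying along the scalar $\frac{(m-1)(n-1)}{2}$ yields the claimed formula.

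There is no genuine obstacle here; the result is an immediate consequence of the two facts cited. The only point deserving a moment of care is the translation of the phrase ``maximum entry $mn - 1$'' into the index $k = 1$, since everything downstream depends on correctly landing in $\I_1(m \times n)$ rather than some other $\I_k$. Once that identification is made, the computation is a one-line substitution.
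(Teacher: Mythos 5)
Your proposal is correct and matches the paper's approach exactly: the paper states this corollary as an immediate consequence of Corollary \ref{incone} together with the hook length formula $N_{m,n}(1)=(mn)!\prod_{i=0}^{m-1}\frac{i!}{(n+i)!}$ for $\vert \SYT(m\times n)\vert$, which is precisely your substitution argument. Your extra care in translating ``maximum entry $mn-1$'' into the index $k=1$ is a sound bookkeeping check, though the paper treats it as implicit in the definition of $\I_k$.
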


Pechenik revealed a relationship between $\I_k(2 \times n)$ and small Schr\"oder numbers \cite[Theorem 1.1]{pechenik}.  The $n$th small Schr\"oder number is equal to $N_{2,n}(2)$ while the $n$th large Schr\"oder number is equal to $2N_{2,n}(2)$.  A {\em large Schr\"oder path} is a path from $(0,0)$ to $(n,n)$ with steps of the form $(1,0)$, $(0,1)$ and $(1,1)$ that stays below the line $y=x$.  A {\em small Schr\"oder path} is a large Schr\"oder path with no diagonal steps along $y=x$.  Pechenik's bijection (in a slightly modified form) between $\I_k(2 \times n)$ and small Schr\"oder paths is given by assigning a step $\epsilon_i$ to each entry $i$ in $T \in \I_k(2 \times n)$.  If $i$ appears only in the first row, then $\epsilon_i=(1,0)$, while if $i$ appears only in the second row, $\epsilon_i=(0,1)$ and if $i$ appears in both the first and second rows, then $\epsilon_i=(1,1)$.  This gives a small Schr\"oder path $P_T=\epsilon_1\epsilon_2\ldots \epsilon_{2n-k}$ and the procedure is easily reversible: given a small Schr\"oder path from $(0,0)$ to $(n,n)$, we can construct a tableau $T \in \I_k(2 \times n)$.

\begin{exa} We give the small Schr\"oder path for $T=\vy{1 & 3 & 4 & 5 \cr 2 & 4 & 5 & 6 \cr}$.
\psset{unit=8mm}
\begin{center}\begin{pspicture}(-1,-1)(5,5)
 \psset{linewidth=.5pt}
  \psline{->}(0,0)(4.5,0)\psline{->}(0,0)(0,4.5)
  \psline[linestyle=dotted](0,0)(4,4)
 \psset{linewidth=.2pt}
  \multips(0,0)(0,1){5}{\psline(4,0)}
  \multips(0,0)(1,0){5}{\psline(0,4)}
 \psset{linewidth=1.6pt}
  \psline(0,0)(1,0)(1,1)(2,1)(4,3)(4,4) 
\end{pspicture}\end{center}
 \end{exa}

\vspace{-.25cm}
We generalize Pechenik's result for rectangular increasing tableaux of arbitrary size, then define small $m$-Schr\"oder paths and give a bijection between these paths and the set of all increasing rectangular tableaux of shape $m \times n$.
\begin{cor} For $m \geq 2$ and $n \geq 1$ we have
$\displaystyle \sum_{k=0}^{(m-1)(n-1)}\vert \I_k(m \times n) \vert=N_{m,n}(2).$  In other words, the total number of increasing tableaux of shape $m \times n$  is given by the small $m$-Schr\"oder number.\end{cor}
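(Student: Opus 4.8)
The plan is to deduce this corollary directly from Theorem~\ref{cardthm} by summing the cardinality formula over all $k$ and then interchanging the order of summation. First I would substitute the expression from Theorem~\ref{cardthm} into the left-hand side, obtaining
\[ \sum_{k=0}^{(m-1)(n-1)}\vert \I_k(m \times n)\vert = \sum_{k=0}^{(m-1)(n-1)}\sum_{\ell=k}^{(m-1)(n-1)}\binom{\ell}{k}N(m,n,\ell). \]

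Next I would interchange the two sums. The pairs $(k,\ell)$ being summed range over the triangular region $\{(k,\ell)\mid 0\leq k\leq \ell\leq (m-1)(n-1)\}$, so reindexing to sum first over $\ell$ from $0$ to $(m-1)(n-1)$ and then over $k$ from $0$ to $\ell$ gives
\[ \sum_{\ell=0}^{(m-1)(n-1)}N(m,n,\ell)\sum_{k=0}^{\ell}\binom{\ell}{k}. \]
I would then apply the elementary binomial identity $\sum_{k=0}^{\ell}\binom{\ell}{k}=2^{\ell}$ to the inner sum, yielding $\sum_{\ell=0}^{(m-1)(n-1)}N(m,n,\ell)\,2^{\ell}$, which is exactly $N_{m,n}(2)$ by the definition of the $m$-Narayana polynomial evaluated at $t=2$.

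There is essentially no hard step here: the result follows from Theorem~\ref{cardthm} together with a single summation interchange and the identity $\sum_k \binom{\ell}{k}=2^\ell$. The only point requiring a little care is the bookkeeping of the summation limits when swapping the order of the sums; this is clean because $\binom{\ell}{k}=0$ whenever $\ell<k$, so the triangular index set can be summed in either order without introducing extraneous terms.
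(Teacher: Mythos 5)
Your proposal is correct and follows exactly the paper's own proof: substitute the formula from Theorem~\ref{cardthm}, interchange the two sums over the triangular index set, apply $\sum_{k=0}^{\ell}\binom{\ell}{k}=2^{\ell}$, and recognize the result as $N_{m,n}(2)$ by definition of the $m$-Narayana polynomial. There is nothing to add or correct.
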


\begin{proof}
We have \begin{eqnarray*}\sum_{k=0}^{(m-1)(n-1)}\vert \I_k(m \times n) \vert&=&\sum_{k=0}^{(m-1)(n-1)} \sum_{\ell=k}^{(m-1)(n-1)} {\ell \choose k} N(m,n,\ell)\\ \\
&=& \sum_{t=0}^{(m-1)(n-1)} \left(\sum_{i=0}^t {t \choose i} \right)N(m,n,t)\\ \\
&=& \sum_{t=0}^{(m-1)(n-1)} 2^t \ N(m,n,t) =N_{m,n}(2). \end{eqnarray*}\end{proof}

Sulanke defined {\em large $m$-Schr\"oder paths} \cite{sulankeD} as paths running from $(0,0,\ldots,0)$ to $(n,n,\ldots,n)$
with nonzero steps of the form $(\xi_1,\xi_2,\ldots,\xi_m)$, with $\xi_i \in \{0,1\}$,
that lie in the region $\{(x_1,x_2,\ldots,x_m)\mid 0 \leq x_m \leq x_{m-1} \leq \cdots \leq x_1\}$.
He proved that the number of large $m$-Schr\"oder paths is equal to $2^{m-1}N_{m,n}(2)$.  

We define a {\em small $m$-Schr\"oder path} to be a large $m$-Schr\"oder path with the property that the path does not contain
any steps from $(x_1,\ldots,x_{j-1},a,a, x_{j+2},\ldots,x_m)$ to $(y_1,\ldots,y_{j-1},a+1,a+1,y_{j+2},\ldots,y_m)$.
In other words, if after $k$ steps the path reaches position $(x_1,\ldots,x_m)$, where $x_j=x_{j+1}$,
then the $(k+1)$th step $\epsilon_{k+1}=(\xi_1,\ldots, \xi_{m})$ cannot have $\xi_j = \xi_{j+1} = 1.$
For example, a small $3$-Schr\"oder path is a path from $(0,0,0)$ to $(3,3,3)$ with nonzero steps of the form
 $(\xi_1,\xi_2,\xi_3)$, $\xi_i \in \{0,1\}$, that lies in the region $\{(x,y,z)\mid 0 \leq z \leq y \leq x\}$
and does not contain any steps from $(a,a,z)$ to $(a+1,a+1,z^\prime)$ or from $(x,b,b)$ to $(x^\prime,b+1,b+1)$.
In the case where $m=2$, the small $m$-Schr\"oder paths are the usual small Schr\"oder paths.

\begin{thm}\label{schroderthm}  There is a bijection between the collection of small $m$-Schr\"oder paths and the set of all increasing tableaux of shape $m \times n$.\end{thm}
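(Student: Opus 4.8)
The plan is to generalize Pechenik's bijection by recording, for each value $i$ appearing in a tableau $T$ of shape $m \times n$, exactly which rows contain $i$. Concretely, I would define the forward map $T \mapsto P_T$ sending $T$ to the path $P_T=\epsilon_1\epsilon_2\cdots\epsilon_s$, where $s$ is the maximum entry of $T$ and the $i$th step is $\epsilon_i=(\xi_1,\ldots,\xi_m)$ with $\xi_j=1$ if $i$ appears in row $j$ of $T$ and $\xi_j=0$ otherwise. Since each row is strictly increasing, a value occupies at most one cell per row, so indeed $\xi_j\in\{0,1\}$; and since $T\in\I_k(m\times n)$ is increasing, every value $1\le i\le s$ appears somewhere, so each $\epsilon_i$ is nonzero. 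For the inverse I would read a small $m$-Schr\"oder path $P=\epsilon_1\cdots\epsilon_s$ back into a tableau $T_P$ by placing the value $i$ in row $j$ precisely when the $j$th coordinate of $\epsilon_i$ equals $1$, filling the cells of each row from left to right in increasing order of $i$. These two constructions are visibly mutually inverse, so the real content is to verify that each lands in the intended set.

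It is convenient to let $r_j(i)$ denote the number of cells in row $j$ of $T$ with entry at most $i$, which, since rows strictly increase, is exactly the $j$th coordinate of $P_T$ after $i$ steps. First I would verify the endpoint and the region condition. Because row $j$ has $n$ distinct entries, the path terminates at $(n,\ldots,n)$. For the region $0\le x_m\le\cdots\le x_1$ I would use column-strictness: in an increasing tableau the cells of row $j$ with entry $\le i$ form a left-justified prefix, and column-strictness forces the cell directly above any such cell in row $j+1$ to carry a smaller entry; hence $r_{j+1}(i)\le r_j(i)$ for every $i$, which is exactly the region condition along $P_T$.

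The key step, and the main obstacle, is the equivalence between column-strictness of the tableau and the \emph{small} defining condition of the path. In the forward direction, suppose a value $i$ occupies a cell in both row $j$ (column $c$) and row $j+1$ (column $c'$). Counting prefixes gives $r_j(i-1)=c-1$ and $r_{j+1}(i-1)=c'-1$, while column-strictness applied to column $c'$ shows the entry of row $j$ there is at most $i-1$, whence $r_j(i-1)\ge c'>c'-1=r_{j+1}(i-1)$. Thus $x_j>x_{j+1}$ \emph{strictly} just before step $i$, so $\epsilon_i$ never raises two equal adjacent coordinates and $P_T$ is small. Running the same argument backwards supplies column-strictness for $T_P$: writing $v_j(c)$ for the step index of the $c$th increase of coordinate $j$ (equivalently, the entry in row $j$, column $c$ of $T_P$), the region condition already forces $v_{j+1}(c)\ge v_j(c)$, and equality would require a single step to raise both coordinate $j$ and coordinate $j+1$ from a common value, which the small condition forbids; hence $v_{j+1}(c)>v_j(c)$ and each column strictly increases. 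Assembling these observations, namely rows strictly increasing by construction, all values present because the steps are nonzero, columns strictly increasing by the small condition, and the two maps mutually inverse, completes the bijection. As a consistency check, a step of weight exceeding one corresponds to a repeated value, so a path with total excess weight $k$ matches a tableau in $\I_k(m\times n)$.
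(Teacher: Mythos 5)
Your proposal is correct and follows essentially the same route as the paper: the identical step-encoding map (row membership of each value becomes a $0/1$ step vector), with column-strictness of the tableau shown to be equivalent to the region condition together with the \emph{small} condition on the path. The only difference is one of completeness, not of method: you spell out the inverse direction (via the indices $v_j(c)$) and phrase the small condition as the strict inequality $r_j(i-1)>r_{j+1}(i-1)$, whereas the paper argues the same point by noting that a forbidden step would force two equal entries in one column and dismisses the inverse with ``reverse the procedure.''
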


\begin{proof}

For an increasing tableau $T$ with largest entry $mn -k$, define a path $P_T=\epsilon_1\epsilon_2\cdots \epsilon_{mn-k}$ from $(0,0,\ldots,0)$ to $(n,n,\ldots,n)$ in the following way.  For each $1 \leq i \leq mn-k$, let $\epsilon_i=(\xi_1,\xi_2,\ldots,\xi_m)$ where $\xi_j=1$ if $i$ appears in the $j$th row of $T$ and $\xi_j=0$ otherwise.  Since $T$ has strictly increasing columns, $P_T$ lies in the region $\{(x_1,x_2,\ldots,x_m)\mid 0 \leq x_m \leq x_{m-1} \leq \cdots \leq x_1\}$.  If, after the $k$th step $\epsilon_k$, the path reaches position $t_k$ then, after the $(k+1)$th step, the path reaches position $t_{k+1}=t_k+\epsilon_{k+1}$. Furthermore, the subtableau of $T$ of shape $\lambda_k=t_k=(x_1,x_2,\ldots,x_m)$ is the portion of $T$ that contains the entries $1,2,\ldots,k$.  (See Example \ref{schroderex} for an illustration.)  If $t_k=(x_1,\ldots,x_m)$ has $x_j=x_{j+1}$, then the subtableau of $T$ containing the entries up to and including $k$ has $x_j$ boxes in both the $j$th and $(j+1)$th rows.  If $\epsilon_{k+1}=(\xi_1,\ldots,\xi_m)$ has $\xi_j=\xi_{j+1}=1$ then $t_{k+1}=(y_1,\ldots,x_{j}+1,x_j+1,\ldots,y_m)$ so the subtableau of $T$ containing the entries up to and including $k+1$ has $x_j+1$ boxes in both the $j$th and $(j+1)$th row, which forces two entries equal to $k+1$ in column $x_j+1$ of $T$.  It follows that $P_T$ is a small $m$-Schr\"oder path.

Given a small $m$-Schr\"oder path $P_T$, we can construct an increasing tableau $T$ of shape $m \times n$ by reversing the above procedure.\end{proof}

\begin{exa}\label{schroderex}
\
\vspace{2mm}

  \noindent For $T=\vy{1 & 2 & 4 & 5 \cr 2 & 4 & 5 & 7 \cr 3 & 6 & 9 & 10 \cr 4 & 8 & 10 & 11 \cr}$, \vspace{2mm}
 $P_T=\epsilon_1\epsilon_2\cdots \epsilon_{11}$ where
$\epsilon_1=(1,0,0,0)$, $\epsilon_2=(1,1,0,0)$, $\epsilon_3=(0,0,1,0)$, $\epsilon_4=(1,1,0,1)$, $\epsilon_5=(1,1,0,0)$, etc.
 The steps in $P_T$ take the path to positions $t_1=(1,0,0,0)$, $t_2=(2,1,0,0)$, $t_3=(2,1,1,0)$, $t_4=(3,2,1,1)$,
 $t_5=(4,3,1,1)$, etc.
The position $t_i$ gives the shape $\lambda=t_i$ of the subtableau of $T$ that contains the entries $1, 2 \ldots, i$.

\end{exa}

\begin{rem}
Using the same construction as in the proof of Theorem \ref{schroderthm}, the large $m$-Schr\"oder paths are in one-to-one correspondence with the set of row-increasing tableaux of shape $m \times n$ where the entries are an initial segment of $\mathbb{Z}_{\geq 0}$ or, by transpose, to the set of semistandard $n \times m$ tableaux with entries an initial segment of $\mathbb{Z}_{\geq 0}$.  By \cite[Proposition 10]{sulankeD}, this subset of the collection of semistandard $n \times m$ tableaux has cardinality equal to $2^{m-1}N_{m,n}(2)$.

\end{rem}

\section{Cyclic sieving phenomena}\label{sec:csp}

In this section, we give a CSP for increasing hook tableaux.
A CSP for semistandard hook tableaux was given in \cite{bms}.
We also show that the polynomial obtained by taking the natural $q$-analogue of the integer in Corollary \ref{33cor},
along with K-jeu de taquin promotion does not, in general, give a CSP for increasing rectangular tableaux,
apart from the $2 \times n$ version given in \cite{pechenik}.

Our focus is on increasing hook tableaux and for such tableaux, $K$-promotion, 
which defines a bijection $\partial:\I_k(N-r,1^r) \rightarrow \I_k (N-r,1^r)$, can be described in the following way.
Given $T \in \I_k(N-r,1^r)$, replace the $1$ in $T$ with a dot and repeatedly move all dots through the tableau
using the rules below until every dot appears in the right-most box of the row or the lowest box in the column. 
Then replace each dot with $N-k$ and decrease all other entries in the tableau by one to obtain $\partial(T)$.

\begin{equation}\label{jdtmoves}\vy{\bullet & a \cr b \cr } \rightarrow
\left\{\begin{array}{ll}
\vy{a & \bullet \cr  b \cr} & \text{if $a < b$} \\[5mm]
\vy{b & a \cr \bullet \cr} & \text{if $b<a $.}
\end{array}\right. , \ \ \ \vy{ \bullet & a \cr  a \cr} \rightarrow \vy{a & \bullet \cr \bullet \cr}. \ \ \ 
\end{equation}

\bigskip

Note that when $k=0$, $K$-promotion amounts to Sch\"utzenberger's jeu de taquin promotion on $\SYT(N-r,1^r)$.
For a description of $K$-promotion on increasing tableaux for general shapes, see \cite{pechenik}.
For a more general description of promotion and its basic properties, a survey is given in~\cite{stanley}.

\begin{exa} For $T=\vy{1 & 2 & 4 & 5  \cr 2 \cr 3 \cr 5 \cr}$,  $K$-promotion works as follows:
\scriptsize{$$T \rightarrow \vy{ \bullet & 2 & 4 & 5  \cr 2 \cr 3 \cr 5 \cr} \rightarrow \vy{2 & \bullet & 4 & 5  \cr \bullet \cr 3 \cr 5 \cr} \rightarrow \vy{2 & 4 & 5  & \bullet \cr 3 \cr 5 \cr \bullet \cr} \rightarrow \vy{1 & 3 & 4 &  5 \cr 2 \cr 4 \cr 5 \cr}=\partial(T).$$}
\end{exa}




The {\em content} of a tableau $T \in \I_k(\lambda)$, where $\lambda$ is a partition of $N$,
is equal to $\alpha=(\alpha_1,\ldots,\alpha_{N-k})$, where $\alpha_i$ gives the number of entries equal to $i$ in $T$;
we denote this by $\cont(T)$.
The symmetric group $S_{N-k}$ on $N-k$ letters acts on $(N-k)$-tuples by permuting places:
 \[ \theta  (\alpha_1,\alpha_2,\ldots,\alpha_{N-k})=(\alpha_{\theta(1)},\alpha_{\theta(2)},\ldots,\alpha_{\theta(N-k)}),
  \mbox{ where } \theta \in S_{N-k}. \]  
For increasing hook tableaux, $K$-promotion permutes the content via the cycle $\sigma=(2\ 3\cdots N-k) \in S_{N-k}$.
In other words, if $\cont(T)=(\alpha_1,\alpha_2,\ldots,\alpha_{N-k})$, where $\alpha_1$ is necessarily equal to $1$, then
\begin{equation}\label{contentperm} \cont(\partial(T)) = (1, \alpha_3,\ldots, \alpha_{N-k}, \alpha_{2}) =
 \sigma (\alpha_1,\ldots,\alpha_{N-k}),\end{equation}
 for $T\in \I_k(N-r,1^r)$.

The cardinality of $\I_k(N-r,1^r)$ is given by
\[ \vert \I_k(N-r,1^r)\vert ={N-k-1 \choose r}{r \choose k}. \]

To give  a CSP for $\I_k(N-r,1^r)$, we work with a map $\psi:\I_k(N-r,1^r) \rightarrow \SYT(N-r-k,1^{r})$ that behaves nicely with respect to $K$-promotion. This will allow us to use established results concerning $\SYT(N-r-k,1^{r})$.
Define $\psi: \I_k(N-r,1^r)\rightarrow \SYT(N-r-k,1^{r})$ by deleting the $k$ entries in the row of $T \in \I_k(N-r,1^r)$ that also appear in the column of $T$.  Then $\psi$ is onto, but not one-to-one.  The following lemma follows easily from (\ref{jdtmoves}).

\begin{lem} \label{commutethm} If $T \in \I_k(N-r,1^r)$, then $\psi(\partial(T))=\partial(\psi(T))$. \end{lem}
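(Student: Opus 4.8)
The plan is to prove the identity $\psi(\partial(T))=\partial(\psi(T))$ by a direct analysis of how $K$-promotion moves dots through a hook, exploiting the fact that a hook has essentially one ``branching'' cell. The starting observation is that in the shape $(N-r,1^r)$ the only box possessing both a right neighbor and a lower neighbor is the corner $(1,1)$; every other box in the first row has only a right neighbor and every other box in the first column has only a lower neighbor. Consequently the third (splitting) rule in (\ref{jdtmoves}) can be invoked at most once, and only at the corner, and once a dot leaves the corner it slides monotonically to the end of its arm. Thus, for $T\in\I_k(N-r,1^r)$, the entire computation of $\partial(T)$ is governed by the two entries $a$ and $b$ sitting immediately to the right of and below the corner, exactly as in the local picture of (\ref{jdtmoves}); note that $\min(a,b)=2$, since $2$ is the smallest non-corner value and can only occupy $(1,2)$ or $(2,1)$.

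First I would record the three mutually exclusive cases and the resulting data, writing $R$ and $C$ for the sets of (non-corner) row and column entries of $T$. If $a<b$ the dot exits right and deposits the new maximum $N-k$ in the last row box; if $a>b$ it exits down and $N-k$ lands in the last column box; if $a=b$ (equivalently $2$ is repeated, occupying both $(1,2)$ and $(2,1)$) the corner splits, a dot travels to the end of each arm, and $N-k$ is deposited at \emph{both} arm-ends. In every case the corner of $\partial(T)$ becomes $1$, the surviving entries are shifted toward the corner and decremented by one, and $N-k$ is inserted at the appropriate arm-end(s). From this I would read off that the repeated entries of $\partial(T)$ are exactly $\{d-1 : d\in R\cap C,\ d\neq 2\}$, together with $N-k$ in the split case. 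This is the step where it matters that $\psi$ deletes only row copies of repeated values, so the column is never disturbed and can be matched verbatim on both sides.

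The key step, and the only one with real content, is reconciling the two descriptions in the splitting case $a=b$, where $\partial$ behaves quite differently on $T$ and on $\psi(T)$. On $T$ the corner splits; but in $\psi(T)$ the row copy of the repeated value $2$ has already been deleted, so the smallest surviving row entry exceeds $b=2$ and the promotion dot exits the corner going \emph{down}, with no split. I would show these reconcile by verifying two points: (i) since $\psi$ leaves the column untouched, the downward slide in $\partial(\psi(T))$ and the column-dot of the split in $\partial(T)$ produce the same column, so $\psi(\partial(T))$ and $\partial(\psi(T))$ agree in the column; and (ii) the extra copy of $N-k$ that the split deposits at the row-end of $\partial(T)$ is a repeated entry of $\partial(T)$ (it also appears at the column-end), hence is precisely one of the row copies that $\psi$ subsequently deletes. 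The two apparently different outputs therefore collapse to the same standard tableau. The remaining cases $a<b$ and $a>b$ involve no split: $\psi(T)$ promotes in the same direction as $T$, and deletion of the row copies commutes with the uniform shift toward the corner and the global decrement.

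I expect the main obstacle to be bookkeeping in the split case, namely matching the shifted-and-decremented row entries of $\psi(\partial(T))$ against those produced by $\partial(\psi(T))$ and checking that the deleted set $\{d-1:d\in R\cap C,\ d\neq2\}$, with the new maximum $N-k$ adjoined, is tracked identically on both sides. Since nothing beyond this index-chasing arises once the single-branch structure of the hook is isolated, the lemma indeed follows from an inspection of the moves in (\ref{jdtmoves}).
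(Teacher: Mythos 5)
Your proof is correct and takes essentially the same approach as the paper: the paper gives no argument at all, merely asserting that the lemma ``follows easily from (\ref{jdtmoves})'', and your case analysis --- isolating the corner $(1,1)$ as the unique branching box and reconciling the split case $a=b=2$ (where $\partial(T)$ splits but $\partial(\psi(T))$ slides down only) --- is precisely the verification the paper leaves to the reader. Your key observation, that the extra copy of $N-k$ deposited at the row end by the split is a repeated entry of $\partial(T)$ and hence is exactly the row copy that $\psi$ deletes, is the crux of why the two sides agree.
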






The {\em order of promotion} on $\I_k(N-r,1^r)$ is the smallest positive integer $\ell$ that satisfies $\partial^\ell(T)=T$ for all $T \in \I_k(N-r,1^r)$.  When $k=0$, there is a one-to-one correspondence between $\SYT(N-r,1^r)$ and the set $\mathcal{A}$ that consists of subsets of $\{2,\ldots,N\}$ containing $r-1$ elements.  Define $\gamma:\SYT(N-r,1^r) \rightarrow \mathcal{A}$, by defining $\gamma(S)$ to be the set of entries in the first column of $S$ that sit below the $(1,1)$-box and let $\theta=(2 \ 3 \ 4 \  \cdots N)^{-1} \in S_N$.  We have $\gamma(\partial(S))=\theta(\gamma(S))$ for $S\in \SYT(N-r,1^r)$ so jeu de taquin promotion on $S  \in \SYT(N-r,1^r)$ is completely determined by considering the action of $\theta$ on the column of $S$.  It follows that the order of promotion on $\SYT(N-r,1^r)$ is equal to $N-1$.  

\begin{thm}The order of promotion on $\I_k(N-r,1^r)$ is equal to $N-k-1$. \end{thm}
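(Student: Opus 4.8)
The plan is to pin the order of $K$-promotion on $\I_k(N-r,1^r)$ from below and from above by transporting the problem along $\psi$ to the hook $\SYT(N-r-k,1^r)$, which has $N-k$ boxes in total. For the lower bound I would use that $\psi$ is onto together with Lemma \ref{commutethm}: if $\partial^{\ell}$ is the identity on $\I_k(N-r,1^r)$, then for every $S\in\SYT(N-r-k,1^r)$ choose $T$ with $\psi(T)=S$ and compute $\partial^{\ell}(S)=\partial^{\ell}(\psi(T))=\psi(\partial^{\ell}(T))=\psi(T)=S$, so $\partial^{\ell}$ is the identity on $\SYT(N-r-k,1^r)$ as well. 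Thus the order of promotion on $\I_k(N-r,1^r)$ is a multiple of the order of ordinary jeu de taquin promotion on the hook $\SYT(N-r-k,1^r)$. Since this hook has $N-k$ boxes, the $k=0$ computation recalled just before the statement (the order of promotion on a hook with $M$ boxes equals $M-1$, independent of the arm/leg split) gives that order as $N-k-1$; in particular the order on $\I_k(N-r,1^r)$ is at least $N-k-1$.

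For the matching upper bound I would show directly that $\partial^{N-k-1}$ is the identity on $\I_k(N-r,1^r)$. The key structural fact I intend to establish is that an increasing hook tableau $T$ is completely determined by the pair $(\psi(T),\cont(T))$. In a hook shape the only box shared by the row and the column is the corner, whose entry $1$ is never repeated; every other value occurs once or twice, occurring twice exactly when it appears both in the row and in the column. Hence $\cont(T)$ records precisely which $k$ values are repeated; call this set $P$. Since $\psi$ deletes the repeated values only from the row and leaves the column untouched, the column of $T$ equals the column of $\psi(T)$, while the row of $T$ is recovered by reinserting, in increasing order, the values of $P$, each of which already lies in the column. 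This reconstruction is unambiguous, and the resulting bijection $T\leftrightarrow(\psi(T),P)$ is consistent with the count $\vert\I_k(N-r,1^r)\vert=\binom{N-k-1}{r}\binom{r}{k}$, in which $\binom{N-k-1}{r}$ enumerates $\SYT(N-r-k,1^r)$ and $\binom{r}{k}$ counts the choices of $P$ among the $r$ column entries below the corner.

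With this determination in hand the upper bound is immediate. On one hand $\psi(\partial^{N-k-1}(T))=\partial^{N-k-1}(\psi(T))=\psi(T)$, since ordinary promotion on $\SYT(N-r-k,1^r)$ has order $N-k-1$. On the other hand, by (\ref{contentperm}) promotion permutes the content by the cycle $\sigma=(2\ 3\cdots N-k)$ of length $N-k-1$, so $\cont(\partial^{N-k-1}(T))=\sigma^{N-k-1}\cdot\cont(T)=\cont(T)$. As $\partial^{N-k-1}(T)$ shares both its $\psi$-image and its content with $T$, the determination forces $\partial^{N-k-1}(T)=T$. Hence the order divides $N-k-1$, and together with the lower bound it equals $N-k-1$.

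I expect the main obstacle to be the structural claim that $(\psi(T),\cont(T))$ determines $T$; everything else is bookkeeping resting on Lemma \ref{commutethm} and (\ref{contentperm}). The claim becomes transparent once one notes that $\psi$ affects only the row while the content isolates the repeated entries, but care is needed to verify that reinserting the values of $P$ into the row always produces a legitimate increasing tableau and that no information about the column is lost. A clean way to make this rigorous is to write down the explicit inverse $(\psi(T),\cont(T))\mapsto T$ and check it is two-sided, which simultaneously re-proves the cardinality formula and closes the argument.
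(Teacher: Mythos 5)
Your upper bound is essentially the paper's own argument: both rest on Lemma \ref{commutethm}, the content rotation (\ref{contentperm}), the fact that $\partial^{N-k-1}$ is the identity on $\SYT(N-r-k,1^{r})$, and the conclusion that $\partial^{N-k-1}(T)=T$ because $T$ is determined by the pair $(\psi(T),\cont(T))$. The paper invokes this last determination silently (``since $\cont(T)=\cont(\partial^{N-k-1}(T))$, we have $\partial^{N-k-1}(T)=T$''), so spelling it out --- the column of $T$ equals the column of $\psi(T)$, and the row of $T$ is the row of $\psi(T)$ with the repeated values, read off from $\cont(T)$, reinserted in increasing order --- is a genuine improvement in rigor. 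Note that for this direction you only need injectivity of $T\mapsto(\psi(T),\cont(T))$, not the two-sided inverse you flag as the main obstacle; the injectivity is immediate from the description just given.

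Your lower bound, however, takes a different route from the paper, and it has a gap. The paper exhibits a witness: a tableau whose content has its $k$ twos in consecutive positions; since promotion rotates content by the $(N-k-1)$-cycle $\sigma$, such a tableau cannot return to itself in fewer than $N-k-1$ steps (when $0<k<N-k-1$). You instead pull the lower bound back through $\psi$, using surjectivity and Lemma \ref{commutethm} to conclude that $\partial^{\ell}=\mathrm{id}$ on $\I_k(N-r,1^r)$ forces $\partial^{\ell}=\mathrm{id}$ on $\SYT(N-r-k,1^{r})$, whose promotion order you take to be $N-k-1$. That last step fails when the hook $(N-r-k,1^{r})$ degenerates: the claim that a hook with $M$ boxes has promotion order $M-1$ ``independent of the arm/leg split'' is false for a single column (or single row), where the unique standard tableau is fixed and the order is $1$. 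The single-column case $N-r-k=1$ is a legitimate regime for the theorem: take $N=6$, $r=4$, $k=1$, so that $\I_1(2,1^4)$ consists of four tableaux (column forced to be $1,2,3,4,5$, row equal to $1,x$ with $x\in\{2,3,4,5\}$) which promotion permutes in a $4$-cycle by (\ref{contentperm}); the theorem's conclusion (order $N-k-1=4$) holds, but $\psi$ maps onto the singleton $\SYT(1^5)$, so your argument yields only the trivial lower bound $1$. To close this you need a content-based witness as in the paper (or some separate treatment of $N-r-k=1$). For what it is worth, the paper is itself cavalier about the extreme case $2k=N-1$, where $\I_k(k+1,1^k)$ is a single promotion-fixed tableau and the stated order $N-k-1$ is actually wrong; but that case defeats every proof, whereas $N-r-k=1$ defeats only yours.
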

\begin{proof}
Let $\sigma=(2\ 3\cdots N-k) \in S_{N-k}$ and suppose that $T \in  \I_k(N-r,1^r)$ has
content $\alpha=(\alpha_1,\alpha_2,\ldots,\alpha_{N-k})$. 
Then the content of $\partial^{N-k-1}(T)$ is equal to $\sigma^{N-k-1}(\alpha_1,\alpha_2,\ldots, \alpha_{N-k})=\alpha$.   

We have $\partial^{N-k-1}(S)=S$ for $S \in \SYT(N-r-k,1^{r})$ so $\partial^{N-k-1}(\psi(T))=\psi(T)$ for $T \in \I_k(N-r,1^r)$.    By Lemma \ref{commutethm}, $\psi(\partial^{N-k-1}(T))=\psi(T)$ and since $\cont(T)=\cont(\partial^{N-k-1}(T))$, we have $\partial^{N-k-1}(T)=T$. Furthermore, $T \in \I_k(N-r,1^r)$ with content equal to $(1,\underbrace{2,2,\ldots,2}_k,\underbrace{1,\ldots,1}_{N-k-1})$ is fixed by no less than $N-k-1$ iterations of $K$-promotion. \end{proof}

The following theorem is due to Reiner, Stanton and White \cite{rsw},
where the theorem is stated in terms of $k$-subsets of $\{1,2,\ldots,N\}$
under the action of the long cycle $(1\ 2\cdots N) \in S_N$.

\begin{thm}\label{rswthm}\cite[Theorem 1.1]{rsw} 
 The triple $\left(\SYT(N-r,1^r), C, X_0(q) \right)$ satisfies the cyclic sieving phenomenon,
 where $C$ is the cyclic group of order $N-1$ given by jeu de taquin promotion on $\SYT(\lambda)$
 and $X_0(q)=\left[\begin{array}{c} N-1 \cr r \cr \end{array}\right]_{q}.$

\end{thm}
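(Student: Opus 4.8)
The statement is the expression, in the language of hook tableaux under jeu de taquin promotion, of the subset form of the cyclic sieving phenomenon proved in \cite[Theorem 1.1]{rsw}. My plan is to transport that subset CSP across the promotion-equivariant bijection $\gamma$ introduced above, using the general principle that the CSP is preserved by any bijection intertwining two cyclic actions of the same order: if $f\colon X\to Y$ is a bijection with $f(g\cdot x)=h\cdot f(x)$ for generators $g,h$ of cyclic groups of equal order, then $f$ restricts, for every $d$, to a bijection between the $g^d$-fixed points of $X$ and the $h^d$-fixed points of $Y$, so the two triples exhibit the CSP for exactly the same polynomial.

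First I would recall \cite[Theorem 1.1]{rsw} with ground set $\{2,3,\ldots,N\}$, a set of $N-1$ elements: its $r$-element subsets, under the cyclic group of order $N-1$ generated by the long cycle, exhibit the CSP with polynomial $X(q)=\left[\begin{array}{c}N-1\\ r\end{array}\right]_q$. Next I would invoke the map $\gamma$, which sends $S\in\SYT(N-r,1^r)$ to the set of entries below the $(1,1)$-box in its first column. Because the first column of the hook $(N-r,1^r)$ has exactly $r$ boxes below the corner and a standard tableau of this shape is determined by the increasing sequence in that column, $\gamma$ is a bijection from $\SYT(N-r,1^r)$ onto the $r$-element subsets of $\{2,\ldots,N\}$, and in particular $\vert\SYT(N-r,1^r)\vert=\binom{N-1}{r}=X(1)$. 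The equivariance $\gamma(\partial(S))=\theta(\gamma(S))$ recorded above, with $\theta=(2\,3\cdots N)^{-1}$, together with the fact that promotion on $\SYT(N-r,1^r)$ has order $N-1$, shows that $\gamma$ intertwines $\partial$ with a generator of the order-$(N-1)$ cyclic group acting on these subsets. Hence, for each $d$,
\[
\vert\{S\in\SYT(N-r,1^r)\mid \partial^d(S)=S\}\vert=\vert\{A\mid \theta^d(A)=A\}\vert=X(\omega^d),
\]
where $\omega=e^{2\pi i/(N-1)}$, the last equality being the subset CSP; this is exactly the CSP for the triple $\left(\SYT(N-r,1^r),C,X(q)\right)$.

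The genuinely hard combinatorial input, the CSP for the $r$-subsets themselves, is supplied entirely by \cite{rsw} and is assumed here, so what remains for me to check is only bookkeeping: matching the ground-set size $N-1$ and subset size $r$ so that the counting polynomial is precisely $\left[\begin{array}{c}N-1\\ r\end{array}\right]_q$, and handling the fact that $\theta$ is the \emph{inverse} of the standard long cycle. The latter is harmless, since a subset is fixed by a permutation exactly when it is fixed by the inverse permutation, so $\theta^d$ and $(2\,3\cdots N)^{d}$ fix the same subsets and the fixed-point counts agree with those governed by $X(q)$. I therefore expect no substantive obstacle beyond this verification, the essential intertwining $\gamma(\partial(S))=\theta(\gamma(S))$ being already established.
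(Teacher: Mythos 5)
Your proposal is correct and follows essentially the same route as the paper: the paper states this result as a citation of \cite[Theorem 1.1]{rsw} (noting it is there phrased for subsets under the long cycle) and relies on the promotion-equivariant bijection $\gamma$ with $\gamma(\partial(S))=\theta(\gamma(S))$ set up in the preceding paragraph to translate it to hook tableaux. Your write-up simply makes that transport argument explicit (and correctly treats $\gamma(S)$ as an $r$-element subset of $\{2,\ldots,N\}$, fixing the paper's slip where it says $r-1$ elements), so there is no substantive difference in approach.
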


\bigskip

\n Let $f_1(q)=\left[\begin{array}{c}N -k-1 \cr r \cr \end{array} \right]_q$,
 $f_2(q)=\left[\begin{array}{c}r \cr k \cr \end{array} \right]_q$ and $X(q)=f_1(q) f_2(q)$, \vspace{1mm}
 which is a $q$-analogue of the formula that enumerates $\I_k(N-r,1^r)$.
In fact, $X(q)$ has a fairly natural combinatorial interpretation.
An ordered pair $(i,j)$ with $2\le i<j \le N-k$ will be called an \emph{inversion} in $T\in\I_k(N-r,1^r)$
if $i$ appears as a row entry in $T$ and $j$ appears as a column entry in $T$.
Then $\sum_{T} q^{a(T)}=q^{k \choose 2} X(q),$
where $\lambda=(N-r,1^r)$ and $a(T)$ is the number of inversions in $T$.
This follows easily from the interpretation of the $q$-binomial coefficients (or Gaussian coefficients)
as generating functions for subsets with respect to ``between-set inversions''.
Details of this interpretation are given in~\cite{GJ}.

\bigskip

\n Let $\omega$ be a primitive $(N-k-1)$th root of unity.  Then $\omega^m$ is a primitive $d$th root of unity where $d\cdot \mbox{gcd}(N-k-1,m)=N-k-1$ and by \cite[Proposition 4.2]{rsw}, 
\begin{equation}\label{order1}f_1(\omega^m)=\left\{ \begin{array}{ll}  \left( \begin{array}{c} (N-k-1)/d \cr r/d \cr \end{array} \right) & \mbox{ if } d \vert r \cr
\cr
0 & \mbox{ otherwise. } \cr \end{array}\right. \end{equation}

In general, $f_2(\omega^m)$ may not be an integer but we are only concerned with the value of $f_2(\omega^m)$ when $f_1(\omega^m) f_2(\omega^m) \neq 0$.  In particular, if $f_1(\omega^m) \neq 0$, then $d \vert r$ so we have
\begin{equation}\label{order2} f_2(\omega^m)=\left\{ \begin{array}{ll}  \left( \begin{array}{c} r/d \cr k/d \cr \end{array} \right) & \mbox{ if } d \vert k \cr \cr

0 & \mbox{ otherwise, } \cr \end{array}\right.  \end{equation}
when $f_1(\omega^m) \neq 0$.

Lemma \ref{contlem} is the main ingredient that will be used to prove a CSP for $\I_k(N-r,1^r)$.  The following example will be useful when reading the proof of Lemma \ref{contlem}.

\begin{exa}  Consider $\psi:\I_2(5,1^6) \rightarrow \SYT(3,1^6)$. Promotion on a tableau in $\SYT(3,1^6)$ corresponds to the action of the permutation $\theta = (2\ 9\ 8\ 7\ 6\ 5\ 4\ 3)$ on the column entries of the tableau.  Since $\theta^4 = (2\ 6)(3\ 7)(4\ 8)(5\ 9)$, the column of a tableau in $\SYT(3,1^6)$ is fixed by $\theta^4$ only when the entries in the column of the tableau below the $(1,1)$-box correspond to the values in three of the four 2-cycles in the decomposition of $\theta^4$. The following tableau in $\SYT(3,1^6)$ satisfies $\partial^4(S)=S$:
 \[ S=\vy{1 & 4 & 8 \cr 2 \cr 3 \cr 5 \cr 6 \cr 7 \cr 9 \cr}. \]
There are $\binom{6}{2} = 15$ tableaux that map to $S$ under $\psi$.
We wish to determine those tableaux in the preimage of $S$ with content that is fixed by $\partial^4$.
In general, a tableau $T \in \I_2(5,1^6)$ has content that is fixed by $\partial^4$ if and only if
the content of $T$ is equal to one of the following:
 \[ (1,\!2,\!1,\!1,\!1,\!2,\!1,\!1,\!1), (1,\!1,\!1,\!1,\!2,\!1,\!1,\!1,\!2),
  (1,\!1,\!2,\!1,\!1,\!1,\!2,\!1,\!1), (1,\!1,\!1,\!2,\!1,\!1,\!1,\!2,\!1). \]   
 If $T \in \I_2(5,1^6)$ also satisfies $\psi(T)=S$, then the two elements in the row of $T$ that are repeated in the column
must belong to one of the $2$-cycles that appear in the decomposition of $\theta^4$,
 so if $\psi(T)=S$ and $\cont(T)=\cont(\partial^4(T))$ then $\cont(T)$ must be equal to one of the first three sequences above.
This completely determines those tableaux in the preimage of $S$ that have content fixed by $\partial^4$:  
 \[ \vy{1 & 2 & 4 & 6 &8  \cr 2 \cr 3 \cr 5 \cr 6 \cr 7 \cr 9 \cr},\quad \vy{1 & 3 & 4 & 7 &8  \cr 2 \cr 3 \cr 5 \cr 6 \cr 7 \cr 9 \cr},
  \quad  \vy{1 & 4 & 5 & 8 &9  \cr 2 \cr 3 \cr 5 \cr 6 \cr 7 \cr 9 \cr}. \]
\end{exa}
\bigskip
\begin{lem}\label{contlem}Let $S \in \SYT(N-r-k,1^{r})$ with $\partial^m(S) = S$ and suppose that $\omega$ is a primitive $(N-k-1)$th root of unity.
The number of $T\in \I_k(N-r,1^r)$ with $\psi(T)=S$ such that $\cont(T)=\cont(\partial^m(T))$ is equal to
 $f_2(\omega^m)=\left[\begin{array}{c}r \cr k \cr \end{array} \right]_{q=\omega^m}.$\end{lem}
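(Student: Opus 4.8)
The plan is to reduce the count to a purely combinatorial statement about $\sigma^m$-invariant subsets, and then match that statement against the evaluation of $f_2(\omega^m)$ recorded in (\ref{order2}).

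First I would describe $\psi^{-1}(S)$ explicitly. Since $\psi$ deletes from the row of $T$ exactly those $k$ entries that are repeated in the column, and leaves the column untouched, every $T$ with $\psi(T)=S$ is obtained from $S$ by choosing a $k$-element subset $D$ of the $r$ column entries lying below the $(1,1)$-box and inserting those values, in increasing order, into the row. As $S$ is standard, the $r$ column entries are disjoint from the row entries, so each such insertion yields a valid increasing hook tableau; conversely the repeated values of any $T \in \psi^{-1}(S)$ are precisely the row entries that also occur below the corner. Writing $\gamma(S) \subseteq \{2,\ldots,N-k\}$ for the set of column entries below the corner, this gives a bijection $\psi^{-1}(S) \leftrightarrow \{D \subseteq \gamma(S) : |D|=k\}$, and in particular $|\psi^{-1}(S)| = \binom{r}{k}$.

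Next I would translate the content condition. Under this bijection $\cont(T)=(\alpha_1,\ldots,\alpha_{N-k})$ has $\alpha_i=2$ when $i \in D$ and $\alpha_i=1$ otherwise, with $\alpha_1=1$ since $1 \notin \gamma(S)$. By (\ref{contentperm}), $\cont(\partial^m(T))=\sigma^m\cdot\cont(T)$ with $\sigma=(2\ 3\cdots N-k)$, so $(\sigma^m\cdot\alpha)_i=\alpha_{\sigma^m(i)}$ and the equality $\cont(T)=\cont(\partial^m(T))$ holds exactly when $\alpha_{\sigma^m(i)}=\alpha_i$ for all $i$, i.e. when $D$ is invariant under $\sigma^m$. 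Restricted to $\{2,\ldots,N-k\}$, $\sigma$ is a single $(N-k-1)$-cycle, so $\sigma^m$ decomposes into $(N-k-1)/d$ cycles, each of length $d$, where $d$ is the order of $\omega^m$; a subset is $\sigma^m$-invariant precisely when it is a union of these cycles.

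The key step is to exploit $\partial^m(S)=S$ to control $\gamma(S)$. Applying the $k=0$ description of promotion to $\SYT((N-k)-r,1^{r})$, i.e. replacing $N$ by $N-k$, promotion on $S$ is realized by the action of $(2\ 3\cdots N-k)^{-1}=\sigma^{-1}$ on the column set $\gamma(S)$, so $\partial^m(S)=S$ forces $\gamma(S)$ to be $\sigma^{-m}$-invariant, hence $\sigma^m$-invariant. Thus $\gamma(S)$ is itself a union of the $d$-cycles of $\sigma^m$; since $|\gamma(S)|=r$ this requires $d \mid r$ and exhibits $\gamma(S)$ as a union of $r/d$ such cycles. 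Any $\sigma^m$-invariant $D\subseteq\gamma(S)$ is then a union of whole cycles drawn from these $r/d$, so $|D|=k$ forces $d\mid k$ and $D$ a choice of $k/d$ of them. Hence the number of admissible $D$ is $\binom{r/d}{k/d}$ when $d\mid k$ and $0$ otherwise, which is exactly $f_2(\omega^m)$ by (\ref{order2}). The main obstacle is precisely this third step: one must correctly identify promotion on $S$ with a cyclic shift of its column set through the $k=0$ analysis, and take care that it is $\gamma(S)$, not the $2$-positions coming from $D$, that the hypothesis constrains. Once both $\gamma(S)$ and $D$ are seen as unions of the same $d$-cycles, the count collapses to a single binomial coefficient and the comparison with (\ref{order2}) is immediate.
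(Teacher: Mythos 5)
Your proposal is correct and follows essentially the same route as the paper's proof: both identify promotion on $S$ with the action of $\sigma^{-1}=(2\ 3\cdots N-k)^{-1}$ on the column set, decompose $\sigma^m$ into $d$-cycles (where $d$ is the order of $\omega^m$), deduce $d\mid r$ from $\partial^m(S)=S$, characterize the content-fixed preimages as unions of $k/d$ of the $r/d$ cycles making up the column, and match the count $\binom{r/d}{k/d}$ against (\ref{order2}). Your explicit bijection between $\psi^{-1}(S)$ and $k$-subsets of $\gamma(S)$ is a slightly more careful packaging of what the paper leaves implicit, but it is the same argument.
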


\begin{proof}
	Since $\omega$ is a primitive $(N-k-1)$th root of unity, $\omega^m$ is a $d$th root of unity where $d\cdot \mbox{gcd}(N-k-1,m)=N-k-1$.  Let $\theta=\sigma^{-1}=(2\ 3\ 4\ 5\cdots N-k)^{-1} \in S_{N-k}$.  The column of $\partial(S)$ is given by the action of $\theta$ on the entries of the column of $S$ that sit below the $(1,1)$-box, so $\theta^m$ fixes these $r$ elements in the column of $S$.  We can write $\theta^m=\theta_1\theta_2\cdots \theta_{m^\prime}$, which is a product of $m^\prime=gcd(N-k-1,m)$ disjoint cycles of length $d$.   Since $\theta^m$ fixes the $r$ column elements of $S$, we have that $d$ divides $r$.  
	
Let $T\in \I_k(N-r,1^r)$ have content equal to $\alpha=(\alpha_1, \alpha_2, \ldots, \alpha_{N-k})$,
 and suppose that $\cont(\partial^m(T))=\alpha$.
Then by (\ref{contentperm}), $\sigma^m \alpha=\alpha$ and since $\sigma^m$ is the product of $d$-cycles and there are exactly $k$ entries $\alpha_i$ that are equal to $2$, we have that $d$ divides $k$.  Furthermore, the  $k$ repeated entries in the row of $T$ can be partitioned into $k/d$ sets of size $d$, where each set consists of elements from one of the $d$-cycles $\theta_1,\theta_2,\ldots,\theta_{m^\prime}$ in the decomposition of $\theta=\sigma^{-1}$.

Since $\theta^m$ fixes the entries in the column of $S$, the entries below the $(1,1)$-box must consist of
the values from $\ell=r/d$ of the $d$-cycles $\theta_1,\theta_2,\ldots,\theta_{m^\prime}$.
Denote this subset of $d$-cycles that give the column of $S$ by $\theta^\prime_1,\theta^\prime_2,\ldots,\theta^\prime_\ell$ .
If $T\in \I_k(N-r,1^r)$, with $\psi(T)=S$ and $\cont(T)=\cont(\partial^m(T))$,
then the $k$ entries in the row of $T$ that are repeated in the column can be partitioned into $k/d$ sets of size $d$,
where each set consists of elements from one of the $d$-cycles $\theta^\prime_1,\ldots,\theta^\prime_\ell$.
There are exactly $\left(\begin{array}{c} r/d \cr k/d \cr \end{array} \right) $ such tableaux and by (\ref{order2}), this is equal to $f_2(\omega^m)$.  \end{proof}

\begin{thm}  The triple $\left(\I_k(N-r,1^r), C, X(q)\right)$ satisfies the cyclic sieving phenomenon, where $C$ is the cyclic group of order $N-k-1$ given by $K$-promotion on $\I_k(N-r,1^r)$ and $X(q)=\left[\begin{array}{c}N -k-1 \cr r \cr \end{array} \right]_q  \left[\begin{array}{c}r \cr k \cr \end{array} \right]_q$.

\end{thm}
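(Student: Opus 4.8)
The goal is to verify the CSP equality $X(\omega^m)=\big|\{T\in\I_k(N-r,1^r)\mid \partial^m(T)=T\}\big|$ for every positive integer $m$, where $\omega$ is a primitive $(N-k-1)$th root of unity and $X(q)=f_1(q)f_2(q)$. The plan is to use $\psi$ to factor the set of $\partial^m$-fixed points into two independent pieces: one piece is governed by the image in $\SYT(N-r-k,1^{r})$ and counted by $f_1(\omega^m)$ through the Reiner--Stanton--White phenomenon of Theorem \ref{rswthm}, while the other is governed by the fibre of $\psi$ and counted by $f_2(\omega^m)$ through Lemma \ref{contlem}.

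The key structural fact I would establish first is that every $T\in\I_k(N-r,1^r)$ is determined by the pair $(\psi(T),\cont(T))$. Since $\psi$ deletes only entries lying in the first row of $T$, the first column of $T$ coincides with that of $S=\psi(T)$; moreover in a hook shape a value occurs at most twice, so the repeated entries of $T$ are exactly the values $i$ with $\alpha_i=2$ in $\cont(T)=(\alpha_1,\dots,\alpha_{N-k})$, and these lie in the column of $S$. Hence the first row of $T$ is recovered as the increasing merge of the first row of $S$ with this set of repeated values, and $T$ is uniquely reconstructed. Consequently $\partial^m(T)=T$ holds if and only if $\psi(\partial^m(T))=\psi(T)$ and $\cont(\partial^m(T))=\cont(T)$. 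Iterating Lemma \ref{commutethm} gives $\psi(\partial^m(T))=\partial^m(\psi(T))$, so the first condition is precisely $\partial^m(S)=S$ for $S=\psi(T)$.

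Next I would partition the $\partial^m$-fixed tableaux according to their image $S=\psi(T)$. For each $S\in\SYT(N-r-k,1^{r})$ with $\partial^m(S)=S$, the tableaux $T$ in the fibre $\psi^{-1}(S)$ that are fixed by $\partial^m$ are exactly those with $\cont(\partial^m(T))=\cont(T)$, since the $\psi$-condition is automatic; by Lemma \ref{contlem} there are $f_2(\omega^m)$ of them. Summing over the fixed images yields
\[
\big|\{T\mid \partial^m(T)=T\}\big|
=f_2(\omega^m)\cdot\big|\{S\in\SYT(N-r-k,1^{r})\mid \partial^m(S)=S\}\big|.
\]
The shape $\SYT(N-r-k,1^{r})$ has $N-k$ boxes, so Theorem \ref{rswthm} applies with jeu de taquin promotion of order $N-k-1$ and CSP polynomial $f_1(q)$; as $\omega$ is a primitive $(N-k-1)$th root of unity this gives $\big|\{S\mid \partial^m(S)=S\}\big|=f_1(\omega^m)$. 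Combining the two displays produces $\big|\{T\mid \partial^m(T)=T\}\big|=f_1(\omega^m)f_2(\omega^m)=X(\omega^m)$, as required.

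The main obstacle is the reverse direction of the fixed-point characterization in the second paragraph: that $\psi(\partial^m(T))=\psi(T)$ and $\cont(\partial^m(T))=\cont(T)$ together force $\partial^m(T)=T$. This is what permits counting fixed points fibre by fibre, rather than tracking the full action of $K$-promotion on $\I_k(N-r,1^r)$ directly, and it is exactly the determinacy of $T$ by $(\psi(T),\cont(T))$. Once that is in hand, the degenerate cases need no separate treatment: when $f_1(\omega^m)=0$ there are no fixed images, and when $f_2(\omega^m)=0$ each fibre contributes nothing, so both sides vanish in accordance with (\ref{order1}) and (\ref{order2}).
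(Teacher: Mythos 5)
Your proposal is correct and takes essentially the same approach as the paper: both characterize the $\partial^m$-fixed tableaux as exactly those $T$ with $\partial^m(\psi(T))=\psi(T)$ and $\cont(\partial^m(T))=\cont(T)$ (via Lemma \ref{commutethm}), then count these fibre by fibre using Lemma \ref{contlem} for each fixed image and Theorem \ref{rswthm} for the number of fixed images, yielding $f_1(\omega^m)f_2(\omega^m)$. Your explicit reconstruction of $T$ from the pair $(\psi(T),\cont(T))$ is a nice touch that spells out the injectivity the paper's proof uses implicitly in the step ``Since $\cont(\partial^m(T))=\cont(T)$, $\partial^m(T)=T$.''
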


\begin{proof} Let $X=\{ T \in \I_k(N-r,1^r) \mid \partial^m(T)=T\}$ and
 \[ Y=\{ T \in \I_k(N-r,1^r) \mid \partial^m(\psi(T))=\psi(T) \mbox{ and } \cont(T)=\cont(\partial^m(T))\}. \]
If $T \in X$, then $\partial^m(T)=T$ so $\cont(\partial^m(T))=\cont(T)$ and $\psi(\partial^m(T))=\psi(T)$.
By Lemma \ref{commutethm}, $\partial^m(\psi(T))=\psi(T)$.
If $T \in Y$, then $\partial^m(\psi(T))=\psi(T)$ so $\psi(\partial^m(T))=\psi(T)$.
Since $\cont(\partial^m(T))=\cont(T)$, $\partial^m(T)=T$.
Thus $\vert X \vert = \vert Y \vert$ and by Theorem \ref{rswthm} and Lemma \ref{contlem},
$\vert Y \vert = f_1(\omega^m) f_2(\omega^m)$.
\end{proof}

We close with an example that shows that a natural $q$-analogue of the polynomial in Corollary \ref{33cor},
coupled with $K$-jeu de taquin promotion does not give a CSP for $\I_1(3 \times 3)$.

\begin{exa}Let $X(q)=\displaystyle \frac{(q^9-1)(q^8-1)(q^7-1)(q^6-1)}{(q^4-1)(q^3-1)^2(q-1)}$, \vspace{1mm}
which is a natural $q$-analogue of the integer from Corollary \ref{33cor} for $n=3$.
The order of promotion on $\I_1(3 \times 3)$ is equal to $8$ and there are four tableaux in $\I_1(3 \times 3)$
that have order equal to two. 
(See \cite{pechenik} for the definition of $K$-promotion for rectangular shapes.)
However, if $\omega$ is a primitive eighth root of unity, $X(\omega^2) = 2-2i$ is not even an integer.

\end{exa}

\begin{bibdiv}

\begin{biblist}

\bib{bms}{article}{author={M. Bennett}, author={B. Madill}, author={A. Stokke}, title={Jeu-de-taquin promotion and a cyclic sieving phenomenon for semistandard hook tableaux}, journal={Discrete Math.}, year={2014}, volume={319}, pages={62-67}}

\bib{frt}{article}{author={J.S. Frame}, author={G. Robinson},
author={R. M. Thrall}, title={The hook graphs of the symmetric
groups}, journal={Canadian J. Math.}, volume={6}, year={1954},
pages={316--324}}

\bib{GJ}{book}{author={I.P. Goulden}, author={D.M. Jackson}, title={Combinatorial Enumeration},
publisher={Wiley Interscience}, place={New York}, year={1983}}

\bib{pechenik}{article}{author={O. Pechenik}, title={Cyclic sieving of increasing tableaux and small Schr\"oder paths}, journal={J. Combin. Theory, Ser. A}, volume={125}, year={2014}, pages={357--378}}

\bib{rsw}{article}{author={V. Reiner}, author={D. Stanton},
author={D. White}, title={The cyclic sieving phenomenon},
journal={J. Combin. Theory Ser. A}, volume={108}, year={2004},
pages={17--50}}

\bib{rhoades}{article}{author={B. Rhoades}, title={Cyclic sieving,
promotion, and representation theory}, journal={J. Combin. Theory
Ser.~A}, volume={117}, year={2010}, pages={38--76}}

\bib{rhoadesnew}{article}{author={B. Rhoades}, title={A skein action of the symmetric group on noncrossing partitions},
 journal={arxiv:1501.04680v3[math.CO]}, year={2015}, pages={1--39}}

\bib{sagan}{book}{author={B. Sagan}, title={The cyclic sieving phenomenon: a survey},
 series={Surveys in
Combinatorics 2011, London Math. Soc. Lecture Note Series},
volume={392}, publisher={Cambridge University Press},
place={Cambridge}, year={2011}, pages={183--234}}

\bib{stanley}{article}{author={R.P. Stanley}, title={Promotion and evacuation},
 journal={Electron.\ J. Combin.}, volume={16(2)}, year={2009}, pages={\#R9}}


\bib{sulankeD}{article}{author={R.A. Sulanke}, title={Generalizing Narayana and Schr\"oder numbers to higher dimensions}, volume={11}, pages={\#R54}, year={2004}, journal={Electron. J. Combin.}}

\bib{sulanke3}{article}{author={R.A. Sulanke}, title={Three dimensional Narayana and Schr\"oder numbers}, volume={346}, year={2005}, pages={455--468}, journal={Theoret. Comput. Sci.}}

\bib{thomasyong}{article}{author={H. Thomas}, author={A. Yong}, title={A jeu de taquin theory for increasing tableaux with applications to K-theoretic calculus}, journal={Algebra Number Theory}, volume={3}, number={2},pages={121-148}, year={2009}}
\end{biblist}

\end{bibdiv}




\end{document}